\documentclass[A4paper,reqno,oneside,11pt]{amsart}

\usepackage[foot]{amsaddr}
\usepackage{fullpage}
\usepackage{setspace}
\usepackage{amsmath,amssymb,amsfonts,amsthm,mathtools,mathrsfs}
\usepackage{tikz}

\usetikzlibrary{arrows.meta,positioning,calc}
\usepackage{enumitem,graphicx}
\usepackage{xcolor}
\usepackage[backref=page]{hyperref}
\usepackage[capitalize,nameinlink]{cleveref}
\usepackage{thmtools}

\usepackage{orcidlink}

\hypersetup{
  colorlinks,
  linkcolor={red!50!black},
  citecolor={green!50!black},
  urlcolor={blue!50!black}
}

\definecolor{Accent}{RGB}{34,74,126}
\definecolor{OrbitBlue}{RGB}{31,119,180}
\definecolor{BridgeRed}{RGB}{180,50,47}
\definecolor{Soft}{RGB}{243,247,251}

\allowdisplaybreaks

\newtheorem{theorem}{Theorem}[section]
\RenewCommandCopy{\theHtheorem}{\thetheorem}
\newtheorem{lemma}[theorem]{Lemma}
\RenewCommandCopy{\theHlemma}{\thelemma}

\RenewCommandCopy{\theHclaim}{\theclaim}

\RenewCommandCopy{\theHproposition}{\theproposition}
\newtheorem{fact}[theorem]{Fact}
\RenewCommandCopy{\theHfact}{\thefact}

\RenewCommandCopy{\theHobservation}{\theobservation}
\newtheorem{corollary}[theorem]{Corollary}
\RenewCommandCopy{\theHcorollary}{\thecorollary}

\RenewCommandCopy{\theHconjecture}{\theconjecture}
\newtheorem{question}[theorem]{Question}
\RenewCommandCopy{\theHquestion}{\thequestion}

\theoremstyle{definition}

\newtheorem*{remark*}{Remark}
\newtheorem{definition}[theorem]{Definition}

\newcommand{\cO}{\mathcal{O}}
\newcommand{\cT}{\mathcal{T}}

\newcommand{\N}{\mathbb{N}}

\newcommand{\R}{\mathbb{R}}

\newcommand{\abs}[1]{\left| #1 \right|}
\newcommand{\of}[1]{\left( #1 \right)}
\newcommand{\set}[1]{\left\{ #1 \right\}}

\DeclareMathOperator{\KG}{KG}
\DeclareMathOperator{\Cay}{Cay}
\DeclareMathOperator{\conv}{conv}
\DeclareMathOperator{\Flip}{Flip}

\newcommand{\Assoc}{\mathrm{Assoc}}
\newcommand{\Diag}{\mathrm{Diag}}
\newcommand{\GrayC}[1]{\mathrm{HC}^{\mathrm{ear}}_{#1}}
\newcommand{\Perm}{\mathrm{Perm}}

\newcommand{\comment}[1]{}

\begin{document}

\setstretch{1.27}

\title{Kneser graphs of triangulations are Hamiltonian}

\author[Anton Molnar]{Anton Molnar\:\orcidlink{0009-0003-2824-8382}}
\author[Cosmin Pohoata]{Cosmin Pohoata\:\orcidlink{0009-0004-4842-2939}}
\author[Michael Zheng]{Michael Zheng\:\orcidlink{0009-0001-2406-6130}}
\address{Department of Mathematics, Emory University, Atlanta, GA, 30322, USA}
\email{\{anton.molnar, cosmin.pohoata, xiangxiang.michael.zheng\}@emory.edu}

\maketitle

\medskip
\begin{abstract}
For every $n \geq 5$, we show that the Kneser graph of triangulations of a convex $n$-gon contains a Hamiltonian cycle.
\end{abstract}

\section{Introduction} \label{sec:intro}
For a finite set system $\mathcal{F}$, the Kneser graph $\KG(\mathcal{F})$ is the graph whose vertices are the members of $\mathcal{F}$ and two vertices $F$ and $F'$ are connected by an edge if $F\cap F' = \emptyset$. When $\mathcal{F} = \binom{[n]}{k}$ and $n \geq 2k$, the resulting Kneser graph encodes two of the most celebrated results in extremal combinatorics. On the one hand, the independence number of this graph is determined by the Erd\H{o}s-Ko-Rado theorem \cite{EKR1961}, stating that an intersecting family has size at most $\binom{n-1}{k-1}$. On the other hand, in a highly influential paper \cite{Lovasz1978}, Lov\'asz proved that the chromatic number of $\smash{\KG\!\left(\binom{[n]}{k}\right)}$ is $n-2k+2$, the first application of topological methods in combinatorics.

Hamiltonicity has also emerged as a parallel and unexpectedly rich theme in the study of $\smash{\KG\of{\binom{[n]}{k}}}$. Since the Kneser graph $\smash{\KG\of{\binom{[n]}{k}}}$ is connected and vertex-transitive for $n\ge 2k+1$, Hamiltonicity may be viewed as a concrete instance of the Lov\'asz problem on Hamiltonian cycles in connected vertex-transitive graphs. See for example \cite{Lovasz1970}, \cite{ChristofidesHladkyMathe2014} and the references therein. 
Until recently, results about Hamiltonicity of $\KG\of{\binom{[n]}{k}}$ have been developed for either sufficiently dense Kneser graphs or very sparse Kneser graphs. It was first proved by Heinrich and Wallis \cite{HeinrichWallis1978} for $n\geq (1+o(1))k^2/\ln{2}$ and was subsequently improved by B. Chen and Lih \cite{ChenLih1987} for $n\geq (1+o(1))k^2/\log{k}$. The first linear bound on $n$ in terms of $k$ was proved by Y. Chen \cite{Chen2000} for $n\ge 3k$ and later improved to $n\ge 2.62k+1$ \cite{Chen2000,Chen2003}. Moreover, Y. Chen and F\"uredi gave a short proof in the divisible case $k\mid n$ with $n \geq 3k$ \cite{ChenFuredi2002}. Removing the divisibility condition of \cite{ChenFuredi2002}, Bellmann and Sch\"ulke gave a simple and elegant proof for $n\ge 4k$ \cite{BellmannSchuelke2021}. In the sparse case, it was shown by M\"utze, Nummenpalo, and Walczak \cite{MutzeNummenpaloWalczak2021} that $\KG\of{\binom{[2k+1]}{k}}$ is Hamiltonian. Finally, in \cite{MerinoMutzeNamrata2023}, Merino, M\"utze, and Namrata established that $\smash{\KG\of{\binom{[n]}{k}}}$ is Hamiltonian for all $n \geq 2k+1$, except for the case of the Petersen graph $\KG\of{\binom{[5]}{2}}$.

In this paper, we study a natural geometric analogue of this problem. Let $\mathcal{P}$ be a fixed convex $n$-gon with vertex set $[n]$ (in cyclic order), and let $\cT_n$ be the set of triangulations of $\mathcal{P}$.
We identify each triangulation as the set of its $n-3$ pairwise noncrossing diagonals, where
\[
\Diag(n)=\bigg\{\{i,j\}\in \binom{[n]}{2}: i-j\not\equiv \pm 1 \pmod n\bigg\}.
\]
The Kneser graph of triangulations, $\KG(\cT_n)$, is the graph whose vertex set is $\cT_n$ and where two triangulations are adjacent if and only if they share no diagonal.
It is a classical result that $\abs{\cT_n}=C_{n-2}$, where $C_m=\frac{1}{m+1}\binom{2m}{m}$ is the $m$th Catalan number. 

Our main result is the following.
\begin{theorem}\label{thm:main}
For every $n\ge 5$, the graph $\KG(\cT_n)$ has a Hamiltonian cycle.
\end{theorem}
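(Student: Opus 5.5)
A natural first move is to use the rotational symmetry of the $n$-gon together with the ear structure of triangulations. The macros \verb|\GrayC| (``HC$^{\mathrm{ear}}$'') and \verb|\Cay| already defined in the preamble suggest the intended route: organize $\cT_n$ by ears and use a Gray-code-type Hamiltonian path in an auxiliary graph. Concretely, the plan is to build the Hamiltonian cycle of $\KG(\cT_n)$ by induction on $n$, with $n=5$ and $n=6$ checked by hand. For $n=5$ the graph $\KG(\cT_5)$ is the $5$-cycle: each triangulation is a pair of diagonals sharing a vertex, and it is disjoint from exactly two others. For $n=6$ one exhibits a cycle on the $14$ triangulations explicitly.

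For the inductive step I would partition $\cT_n$ according to a local feature that forces disjointness. Two triangulations $T,T'$ are disjoint if, for instance, $T$ is a fan (or near-fan) at vertex $i$ and $T'$ avoids all diagonals at $i$. More usefully, if $T$ contains the ear at vertex $n$, i.e.\ the diagonal $\{1,n-1\}$, then $T$ restricts to a triangulation of the $(n-1)$-gon on $[n-1]$. I would therefore split $\cT_n$ into the classes $\mathcal{E}_i$ of triangulations having an ear at vertex $i$, refined to a genuine partition by taking, say, the smallest ear index. Within one class, disjointness in $\cT_n$ is governed by disjointness of the restricted triangulations, together with the fact that the ear diagonal itself is shared. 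That shared diagonal is exactly the obstruction: two members of the same ear class are never adjacent. So the cycle must alternate between classes, which suggests pairing classes $\mathcal{E}_i$ and $\mathcal{E}_j$ with $i,j$ far apart on the polygon. A triangulation with an ear at $i$ and one with an ear at $j$ can then be disjoint, and we reduce to finding a Hamiltonian path in a bipartite-like \emph{product} structure built from $\KG(\cT_{n-1})$ and rotations of it.

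The key steps, in order, are as follows. (1) Define, for each rotation $\rho^k$, the map $T\mapsto\rho^k T$, and show that for suitable $k$ (roughly $n/2$) many triangulations are disjoint from their rotation. (2) Using the inductive Hamiltonian cycle $C_{n-1}$ in $\KG(\cT_{n-1})$, lift it to paths through ear classes, interleaving with rotated copies so that consecutive vertices are disjoint. (3) Glue the resulting paths, one per ear class or per orbit, into a single cycle using a few ``bridge'' edges between path endpoints. (4) Handle the triangulations having only the two ears that the partition sends to exceptional classes.

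The main obstacle is step (3) with the overlaps in step (4). Every triangulation has at least two ears, so the ear classes overlap, and a clean partition makes the pieces unequal in size. The bridging edges must then be found explicitly and uniformly in $n$. I would first try to choose path endpoints to be fans or zigzag triangulations, whose disjointness relations are completely explicit: a fan at $i$ is disjoint from any triangulation with no diagonal at $i$, for example a zigzag avoiding $i$. This makes the bridges verifiable by direct inspection. If parity issues arise between class sizes, I would absorb the imbalance by rerouting a short segment of the cycle through the fans.
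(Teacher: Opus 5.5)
Your proposal has a genuine gap: it never supplies a mechanism that produces edges of $\KG(\cT_n)$ between different pieces, and the one it gestures at fails. You correctly note that lifting the inductive cycle of $\KG(\cT_{n-1})$ by appending the ear $\{1,n-1\}$ kills every edge, since all lifts share that diagonal. ``Interleaving with rotated copies'' does not repair this. Rotating the $n$-gon does not preserve the sub-polygon on $[n-1]$, and disjointness of $S,S'\in\cT_{n-1}$ gives no control over $S\cap r^{j}(S')$; for instance $\{2,4\}\in S'$ and $\{3,5\}\in S$ already collide for $j=1$.

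What a unit rotation can turn into a Kneser edge is near-equality, not disjointness. If two triangulations share all but one diagonal, the shared part is automatically disjoint from its own rotation. Disjointness of $S$ and $S'$ carries no analogous information.

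Step (1) also points the wrong way. The useful fact is that \emph{every} $T$ is disjoint from its unit rotation $r(T)$, because $\{i,j\}$ and $\{i+1,j+1\}$ always cross. Rotating by about $n/2$ is worse: for even $n$ it fixes each diameter $\{i,i+n/2\}$. Finally, your ear classes are independent sets of unequal sizes, and nothing in steps (3)--(4) shows they can be threaded into one cycle. ``Rerouting through fans'' is not an argument. As it stands, the inductive step is a plan without a proof.

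These are exactly the points the paper exploits.
\begin{itemize}
\item Since $T\cap r(T)=\emptyset$, the rotation orbits induce cycles and form a $2$-factor of $\KG(\cT_n)$ for $n\ge 7$.
\item Bridge lemma: if $T=(T'\setminus\{d'\})\cup\{d\}$ differ by a flip, the only possible common diagonal of $T$ and $r^{\epsilon}(T')$ is $d$. This requires $d=r^{\epsilon}(d')$, which holds for at most one $\epsilon\in\{1,-1\}$. So flip-adjacent triangulations yield Kneser edges between their orbits.
\item The guide is therefore a Hamiltonian cycle of $\Flip(n-1)$ (Lucas), not of $\KG(\cT_{n-1})$, lifted by appending the ear $\{1,n-1\}$. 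It meets every orbit, because every triangulation has an ear that some rotation moves to vertex $n$.
\item The ear serves only to select orbit representatives; it does not partition the vertex set. The lifted cycle is independent in $\KG(\cT_n)$, which bounds each orbit's degree in the resulting spanning tree by $1+|\cO|/2\le|\cO|$.
\item Hence the bridges can be rotated onto distinct orbit edges, and the orbit cycles spliced in one at a time. The size-$2$ orbit for $n=6$ needs a small adjustment.
\end{itemize}
No induction on $n$ for $\KG$ is needed.
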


Theorem \ref{thm:main} continues the line of work from \cite{MolnarPohoataZhengZhu2025}, where Zhu and the authors recently established a Lov\'asz-Kneser theorem for $\KG(\cT_n)$. We refer the interested reader to the introduction in \cite{MolnarPohoataZhengZhu2025} for more context. With this connection in mind, it is natural to also phrase the Hamiltonicity problem in polyhedral language. For a polytope $\mathcal{Q}$, define its Kneser graph $\KG(\mathcal{Q})$ to be the graph whose vertices are the vertices of $\mathcal{Q}$, with two vertices adjacent if and only if they lie on no common facet. The associahedron $\Assoc_{n-3}$ is an $(n-3)$-dimensional polytope whose vertices correspond to triangulations of an $n$-gon and whose $1$-skeleton is $\Flip(n)$. Since the facets of $\Assoc_{n-3}$ correspond to diagonals of the $n$-gon, we have
\begin{equation} \label{assoc}
\KG(\cT_n)=\KG(\Assoc_{n-3}).
\end{equation}
This perspective played a fundamental role in the proof that $\chi(\KG(\cT_n)) = n-2$ from \cite{MolnarPohoataZhengZhu2025}, which combined topological tools with geometric properties of the associahedron. In contrast, our proof of Theorem \ref{thm:main} here will be purely combinatorial. However, the relation from \eqref{assoc} comes with an interesting geometric consequence, as well as a few natural further questions. We first record the following immediate corollary of Theorem \ref{thm:main}.
\begin{corollary}
    For $n \geq 5$, there exists an enumeration $v_1, \dots, v_{C_{n-2}}$ of the vertices of $\Assoc_{n-3}$ such that the open segment $v_{i}v_{i+1}$ lies within the interior of $\Assoc_{n-3}$, for every $i=1,\ldots,C_{n-2}$. 
\label{thm:main_polytope}
\end{corollary}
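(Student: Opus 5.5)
The corollary is a direct translation of Theorem~\ref{thm:main} through the identification \eqref{assoc}. Take the Hamiltonian cycle $T_1, T_2, \dots, T_{C_{n-2}}, T_1$ in $\KG(\cT_n)$ given by Theorem~\ref{thm:main}. Let $v_i$ be the vertex of $\Assoc_{n-3}$ corresponding to $T_i$, with indices read cyclically so that $v_{C_{n-2}+1}=v_1$. The task then reduces to one geometric claim: if two vertices $u,w$ of a polytope $\mathcal{Q}$ lie on no common facet, then the open segment $uw$ is contained in the interior of $\mathcal{Q}$.

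For the claim, write $\mathcal{Q}$ as a full-dimensional polytope in its affine hull, which is $\R^{n-3}$ here. Use its irredundant facet description $\mathcal{Q}=\{x : \langle a_F, x\rangle \le b_F \text{ for all facets } F\}$. A point of $\mathcal{Q}$ lies on the boundary if and only if it satisfies some facet inequality with equality. Fix a point $x=\lambda u+(1-\lambda)w$ with $0<\lambda<1$, and fix a facet $F$. By hypothesis at least one of $u,w$ does not lie on $F$, so at least one of $\langle a_F,u\rangle\le b_F$ and $\langle a_F,w\rangle\le b_F$ is strict. Because both coefficients $\lambda$ and $1-\lambda$ are positive, the convex combination $\langle a_F,x\rangle$ is strictly less than $b_F$. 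This holds for every facet, so $x$ lies in the interior.

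It remains to check the combinatorial dictionary for $\Assoc_{n-3}$. Its facets are in bijection with the diagonals in $\Diag(n)$, and the vertex for a triangulation $T$ lies on the facet for a diagonal $d$ exactly when $d\in T$. Hence $T_i\cap T_{i+1}=\emptyset$ means precisely that $v_i$ and $v_{i+1}$ share no facet, and the claim applies to each consecutive pair, including the wrap-around pair $v_{C_{n-2}},v_1$.

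There is no real obstacle. The only point needing care is to work in the affine hull, which is automatic since $\Assoc_{n-3}$ is full-dimensional in its standard realization, and to rely on the standard fact that the facet–diagonal incidence is exactly membership.
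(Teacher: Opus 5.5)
Your proposal is correct and follows the paper's route: the paper records this as an immediate consequence of Theorem~\ref{thm:main} via the identification $\KG(\cT_n)=\KG(\Assoc_{n-3})$. You additionally spell out the elementary convexity fact the paper leaves implicit, namely that two vertices on no common facet span an open segment in the (relative) interior, because every facet inequality becomes strict along that segment.
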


In particular, \Cref{thm:main_polytope} suggests that Hamiltonicity phenomena for Kneser graphs may be viewed as a kind of ``hidden cyclic visibility'' property of the vertex set of a polytope. 
Another polytope whose Kneser graph can be studied in the same spirit is the permutohedron
\[
\Perm_n \coloneqq  \conv\set{(\sigma(1),\dots,\sigma(n)) : \sigma\in S_n}\subset \R^n.
\]
Its vertex set is naturally identified with $S_n$, and a standard facet description is
\[
\sum_{i\in I}x_i \ge \binom{|I|+1}{2}
\qquad (\emptyset\neq I\subsetneq [n]),
\]
together with the affine equation $\sum_{i=1}^n x_i=\binom{n+1}{2}$; see \cite{Postnikov2009}. In \Cref{sec:perm}, we will use the ideas from Theorem \ref{thm:main} to also show that $\KG(\Perm_n)$ is Hamiltonian, where the analysis is significantly simpler. This is due to the fact that $\KG(\Perm_n)$ is a much denser graph than $\KG(\Assoc_{n-3})$. Curiously, it turns out that $\KG(\Perm_n)$ is so dense that it contains a $k$th power of a Hamiltonian cycle, for every $k \geq 1$. 

\begin{theorem}\label{thm:permutohedron}
Let $k\geq 1$. Then for $n$ sufficiently large, $\KG(\Perm_n)$ contains a $k$th power of a Hamiltonian cycle. 
\end{theorem}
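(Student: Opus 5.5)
The plan is to reduce \Cref{thm:permutohedron} to a minimum-degree condition and then invoke the Komlós--Sárközy--Szemerédi theorem (the Pósa--Seymour conjecture for large graphs). That theorem says: for every $k$ there is $N_0(k)$ such that every graph on $N\ge N_0(k)$ vertices with minimum degree at least $\frac{k}{k+1}N$ contains the $k$th power of a Hamiltonian cycle. Here $N=n!$, which exceeds $N_0(k)$ once $n$ is large. So it suffices to show that $\KG(\Perm_n)$ has minimum degree at least $\bigl(1-\frac{1}{k+1}\bigr)n!$ for $n$ sufficiently large in terms of $k$.

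\textbf{Step 1: which facets contain a vertex.} Fix a vertex $x_\sigma=(\sigma(1),\dots,\sigma(n))$ and a facet $I$ with $|I|=m$. Since the $\sigma(i)$ are distinct positive integers, $\sum_{i\in I}\sigma(i)\ge 1+2+\dots+m=\binom{m+1}{2}$, with equality iff $\sigma(I)=\{1,\dots,m\}$. So $x_\sigma$ lies on facet $I$ iff $I=\sigma^{-1}(\{1,\dots,m\})$. Hence each vertex lies on exactly $n-1$ facets, one for each $m\in\{1,\dots,n-1\}$. Two vertices $\sigma,\tau$ share a facet iff
\[
\sigma^{-1}(\{1,\dots,m\})=\tau^{-1}(\{1,\dots,m\})
\]
for some $1\le m\le n-1$.

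\textbf{Step 2: counting non-neighbours.} Fix $\sigma$ and $m$. The permutations $\tau$ with $\tau^{-1}(\{1,\dots,m\})=\sigma^{-1}(\{1,\dots,m\})$ are obtained by permuting the values within that set and within its complement, so there are exactly $m!(n-m)!$ of them. By a union bound, the number of non-neighbours of $\sigma$ (including $\sigma$ itself) is at most
\[
\sum_{m=1}^{n-1} m!\,(n-m)! \;=\; 2(n-1)! + 4(n-2)! + \sum_{m=3}^{n-3} m!(n-m)!.
\]
Each middle term is at most $3!(n-3)!$, because $m!(n-m)!$ is log-convex in $m$ and so maximised at the ends of the range. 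The middle sum is therefore at most $6n(n-3)!=O((n-1)!/n)$. Thus
\[
\delta\bigl(\KG(\Perm_n)\bigr)\;\ge\; n!\Bigl(1-\frac{2+o(1)}{n}\Bigr).
\]

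\textbf{Step 3: conclude.} For $n\ge 2(k+1)+C$, with $C$ an absolute constant absorbing the $o(1)$, we get $\frac{2+o(1)}{n}\le\frac1{k+1}$. Hence $\delta\ge\frac{k}{k+1}n!$, and the Komlós--Sárközy--Szemerédi theorem yields the $k$th power of a Hamiltonian cycle.

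The main point to watch is the appeal to Komlós--Sárközy--Szemerédi. Its threshold $N_0(k)$ is astronomically large and non-explicit, but that is harmless here because $N=n!$ and we only claim the result for $n$ sufficiently large. If a self-contained argument were preferred, I would instead try an explicit construction. Write permutations as words $w=\sigma^{-1}(1)\cdots\sigma^{-1}(n)$. Two words are adjacent iff no proper prefixes coincide as sets, which holds in particular when their first letters differ, their last letters differ, and so on. I would then order the words so that any $k+1$ consecutive ones have pairwise distinct first letters and pairwise distinct last letters, for instance by cycling the first letter through $[n]$. Controlling the middle prefixes in such a construction is the genuinely delicate part, so the degree-based route is the one I would follow.
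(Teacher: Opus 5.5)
Your proposal is correct and follows essentially the same route as the paper: you bound the minimum degree by $\delta(\KG(\Perm_n))\ge\bigl(1-O(1/n)\bigr)n!$ and then apply the Koml\'os--S\'ark\"ozy--Szemer\'edi theorem. Your union bound $\sum_{m=1}^{n-1} m!(n-m)! = n!\sum_{m=1}^{n-1}\binom{n}{m}^{-1}$ is exactly the inequality the paper cites for counting decomposable permutations (the paper phrases it through $\KG(\Perm_n)=\Cay(S_n,\mathcal{I}_n)$), and your log-convexity estimate proves directly the $O(1/n)$ decay that the paper takes from the literature.
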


In striking contrast with \Cref{thm:permutohedron}, the Kneser graph of the associahedron does not admit higher powers of Hamiltonian cycles in general. Indeed, the obstruction already appears for the square of a Hamiltonian cycle. If a graph contains the square of a Hamiltonian cycle, then every vertex lies in a triangle, since each vertex is adjacent to its two immediate neighbors and to the two vertices at distance two along the cycle. However, $\KG(\Assoc_{n-3})=\KG(\cT_n)$ has vertices that lie in no triangle at all; for example, this happens for the star triangulations (that is, triangulations whose diagonals all share a common endpoint). See for example Proposition $5.2$ in \cite{MolnarPohoataZhengZhu2025}. Consequently, $\KG(\Assoc_{n-3})$ cannot contain the square of a Hamiltonian cycle, and hence cannot contain the $k$th power of a Hamiltonian cycle for any $k\ge 2$. 

Before we proceed to the proofs of Theorem \ref{thm:main} and Theorem \ref{thm:permutohedron}, let us also say a few words about the high-level idea that we will be pursuing.

\subsection*{Proof overview}

In a few words, our proof may be viewed as a geometric analogue of the strategy used by Merino, M\"utze, and Namrata in \cite{MerinoMutzeNamrata2023} (which itself builds upon previous ideas by Chen and F\"uredi \cite{ChenFuredi2002}), though the execution in our setting is considerably simpler.

The starting point is the \emph{flip graph} $\Flip(n)$ on triangulations of the $n$-gon: its vertex set is $\cT_n$, and two triangulations are adjacent if they differ by a single flip, that is, if one is obtained from the other by replacing one diagonal with the other diagonal of the corresponding quadrilateral. The graph $\Flip(n)$ is the $1$-skeleton of the associahedron, and it is Hamiltonian by work of Lucas \cite{Lucas1987}; see also Hurtado and Noy \cite{HurtadoNoy1999}. On the other hand, the Kneser graph $\KG(\cT_n)$ admits a canonical cycle decomposition coming from rotations of the polygon: as shown in \cref{lem:orbit_cycle,lem:rotationorbits}, each rotation orbit induces a cycle in $\KG(\cT_n)$ (except for the small exceptional case $n=6$, which is handled separately). The idea then becomes to merge these orbit-cycles into a single Hamiltonian cycle. To do this, we use a sparse guide cycle inside the flip graph. Rather than working directly with a Hamiltonian cycle of $\Flip(n)$, we start from a Hamiltonian cycle of $\Flip(n-1)$ and append the diagonal $\{1,n-1\}$, which we refer to as an \emph{ear}. This produces a cycle $\GrayC{n}\subseteq \Flip(n)$ whose vertices all contain the same diagonal. In particular, $\GrayC{n}$ is an independent set in $\KG(\cT_n)$, while at the same time it meets every rotation orbit. The guide cycle therefore provides a controlled way of selecting representatives from the orbit-cycles.

The key local step is \cref{lem:flips}: if two triangulations differ by a flip, then after rotating one of them by $\pm1$, the two become disjoint. This allows us to convert transitions in the guide cycle into edges of $\KG(\cT_n)$ connecting different rotation orbits. Finally, a greedy algorithm shows that these connecting edges can be chosen compatibly, so that the orbit-cycles can be glued together one by one into a Hamiltonian cycle. It is perhaps worth contrasting this last part of the argument with the analogous part of the strategy from \cite{MerinoMutzeNamrata2023}. To build a Hamiltonian cycle in $\KG({[n] \choose k})$, Merino, M\"utze and Namrata also start from a structured decomposition together with an auxiliary Hamiltonian object, but there is no natural analogue of our geometric bridge lemma: passing from a local move in the auxiliary graph to an actual edge in the Kneser graph is substantially more delicate. A significant part of their argument is devoted precisely to the construction and control of such bridges. In contrast, in the triangulations setting, the ambient geometry provides the cycle decomposition as well as the bridges in a much more canonical way. 

\section{Proof of Theorem \ref{thm:main}}

We write $r$ for the clockwise rotation $i\mapsto i+1\pmod n$ of the polygon.

\begin{definition}
For a diagonal $d=\{i,j\}\in \Diag(n)$, define its rotation by
\[
r(d)\coloneqq \{i+1,j+1\},
\]
where indices are taken modulo $n$. For a triangulation $T\in \cT_n$, define
\[
r(T)\coloneqq \{r(d):d\in T\}.
\]
Two triangulations $T,T'$ are in the same \emph{rotation orbit}, written $T\approx T'$, if $T=r^k(T')$ for some $k\in \N$. Clearly, each rotation orbit is closed under $r$, i.e.
\end{definition}

\begin{fact}
If $T$ belongs to a rotation orbit $\cO$, then $r(T)\in \cO$.
\end{fact}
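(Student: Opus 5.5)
The statement is immediate from the definition of $\approx$, so the plan is a one-line verification rather than a real argument. Let $T \in \cO$ and fix any $T' \in \cO$ representing the orbit. By definition there is some $k \in \N$ with $T = r^k(T')$. Applying $r$ gives
\[
r(T) = r\bigl(r^k(T')\bigr) = r^{k+1}(T'),
\]
and $k+1 \in \N$. Hence $r(T) \approx T'$, that is, $r(T) \in \cO$.

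Two small points need a word, and I will address them in this order.

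\emph{$r(T)$ is a triangulation.} Rotation $i \mapsto i+1 \pmod n$ preserves the cyclic order of the vertices of $\mathcal{P}$. It therefore sends diagonals to diagonals, since $i-j \not\equiv \pm1$ is invariant under the shift. It also preserves noncrossing, because crossing of $\{a,b\}$ and $\{c,d\}$ is determined by the cyclic interleaving of $a,b,c,d$. Finally, it is injective on diagonals. So $r(T)$ is a set of $n-3$ pairwise noncrossing diagonals, i.e.\ $r(T) \in \cT_n$.

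\emph{The orbit is well defined.} Because $r^n$ is the identity, we have $r^{-1} = r^{n-1}$. Consequently the relation ``$T = r^k(T')$ for some $k \in \N$'' is symmetric as well as reflexive and transitive, so $\approx$ is an equivalence relation and rotation orbits make sense. This does not affect the computation above; it only ensures that ``$T$ belongs to $\cO$'' is unambiguous.

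There is no genuine obstacle here. The only step requiring any care is the observation that $r$ acts on $\cT_n$ at all, i.e.\ that rotation preserves the noncrossing property, and that follows from cyclic-order invariance.
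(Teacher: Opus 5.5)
Your proof is correct and matches the paper's approach. The paper states this fact without proof, as immediate from the definition of $\approx$, and your computation $r(T)=r^{k+1}(T')$ is that verification. Your side remarks, that $r$ maps $\cT_n$ to itself and that $\approx$ is an equivalence relation because $r^n$ is the identity, are sound additions the paper leaves implicit.
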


We next note that the only obstruction to a rotation orbit forming a genuine cycle is the possibility that it has size $2$.

\begin{lemma}\label{lem:orbit_cycle}
A rotation orbit has size $2$ if and only if $n\in\{4,6\}$ and every triangulation in the orbit contains only ears, i.e.\ diagonals of length $2$.
\end{lemma}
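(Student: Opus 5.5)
The plan is to turn the orbit-size condition into a fixed-point condition. A rotation orbit $\cO$ of $T$ has size $2$ exactly when $r^2(T)=T$ and $r(T)\neq T$. Once this is set up, the ``if'' direction is a direct check, and the ``only if'' direction carries the real content.

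For the ``if'' direction, take $n=4$. The two triangulations $\{\{1,3\}\}$ and $\{\{2,4\}\}$ are swapped by $r$, so they form an orbit of size $2$, and each consists of a single ear. Take $n=6$. The triangulation $\{\{1,3\},\{3,5\},\{5,1\}\}$ and its rotation $\{\{2,4\},\{4,6\},\{6,2\}\}$ are distinct, and $r^2$ fixes each of them. Again every diagonal has length $2$.

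For the ``only if'' direction, suppose $r^2(T)=T$ and let $d\in T$ be a diagonal of minimal length $\ell$. Here the length of $\{i,j\}$ is $\min(|i-j|, n-|i-j|)\ge 2$. The diagonals $d, r^2(d), r^4(d),\dots$ all lie in $T$, so they must be pairwise noncrossing.

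\begin{itemize}
\item If $\ell\ge 3$, then $d=\{i,i+\ell\}$ and $r^2(d)=\{i+2,i+\ell+2\}$ cross whenever $r^2(d)\neq d$, since $i<i+2<i+\ell<i+\ell+2$ in cyclic order. We have $r^2(d)=d$ only if the rotation by $2$ swaps the endpoints, i.e.\ $n=4$ and $d$ has length $2$, contradicting $\ell\ge 3$. So $\ell=2$.
\item With $\ell=2$, write $d=\{i,i+2\}$. Then $T$ contains every ear $\{i+2k,i+2k+2\}$. If $n$ is odd, these ears are all rotations of the ear $d$, which pairwise cross for $n\ge5$; so the case $n$ odd with $n\ge 5$ is impossible. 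For $n=3$ there are no diagonals at all, and for $n=5$ one checks directly that the orbit has size $5$.
\item If $n$ is even, the ears $\{i,i+2\},\{i+2,i+4\},\dots$ form $n/2$ noncrossing ears, which together cut off an inner $(n/2)$-gon. The remaining diagonals of $T$ triangulate this inner polygon on the vertices $i,i+2,\dots$, and that triangulation must be invariant under the rotation of the inner polygon by one step.
\end{itemize}

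Now apply the same minimal-length argument to the induced triangulation of the inner $m$-gon, where $m=n/2$. Rotation-invariance by $r$ forces every diagonal to cross its own image unless it is fixed, and a diagonal fixed by a one-step rotation would need $m=2$, which is impossible. So the inner triangulation has no diagonals, which forces $m=3$ and hence $n=6$, with $T$ consisting only of ears. The remaining even case, $m=2$, is exactly $n=4$, where the single diagonal is an ear. This gives $n\in\{4,6\}$ with all diagonals ears.

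The main obstacle is making the crossing claim precise. Specifically, one must show that for $\ell\ge3$ (and, in the inner step, for any length) the diagonals $d$ and $r^s(d)$, with $s$ the small shift, genuinely interleave rather than coincide or share an endpoint. This needs a short cyclic-order case check, using that the shift $s$ is smaller than the length.
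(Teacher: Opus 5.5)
Your proof is correct, but it is organized differently from the paper's. The paper applies the crossing observation to \emph{every} diagonal: any $d\in T$ of length at least $3$ crosses $r^2(d)$, so all diagonals of $T$ are ears in one step. It then asserts, without argument, that an all-ears triangulation exists only for $n\in\{4,5,6\}$, and discards $n=5$ by direct inspection.

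You instead apply the length argument only to a minimal diagonal, which yields one ear. Restricting to a minimal diagonal is unnecessary, since the same argument shows that all diagonals are ears. You then propagate that ear under $r^2$ and split by parity. For odd $n$ you obtain two crossing ears. For even $n$ you reduce to a triangulation of the inner $(n/2)$-gon that is invariant under its one-step rotation. That triangulation must be empty, because every diagonal crosses its one-step rotation, and this forces $n/2\le 3$.

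The paper's route is shorter. Yours supplies an actual justification of the step ``only ears $\Rightarrow n\le 6$'' that the paper leaves implicit. It also rules out all odd $n$ at once, so your separate $n=5$ check is redundant.

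One small misstatement: for odd $n$ the ears $\{i+2k,i+2k+2\}$ do not pairwise cross, since consecutive ones share an endpoint. What you need is that $\{i,i+2\}$ and $\{i+1,i+3\}$ both lie in $T$ and cross. More simply, for odd $n$ the rotation $r^2$ generates all rotations, so $r^2(T)=T$ forces $r(T)=T$. This is impossible because $d$ and $r(d)$ always cross.
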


\begin{proof}
Suppose $\cO$ has size $2$, so $r^2(T)=T$ for every $T\in \cO$. If some $d\in T$ has length at least $3$, then $d$ and $r^2(d)$ cross, contradiction. Thus every diagonal in $T$ has length $2$. This can happen only for $n\in\{4,5,6\}$. For $n=5$, however, all triangulations lie in a single rotation orbit of size $5$. Hence only $n\in\{4,6\}$ is possible.

Conversely, for $n=4$ and $n=6$, the triangulations consisting only of ears indeed form orbits of size $2$.
\end{proof}

For $n \geq 7$, it thus follows from Lemma \ref{lem:orbit_cycle} that each rotation orbit must have size at least $3$. In contrast, the case $n=6$ is a bit special, and will require a slight modification of the overall strategy. We will address this at the end of this section.

\begin{lemma}\label{lem:rotationorbits}
If $\abs{\cO}\ge 3$ and $T\in \cO$, then
\[
T,r(T),\dots,r^{\abs{\cO}-1}(T),T
\]
forms a cycle in $\KG(\cT_n)$. In particular, for $n\ge 7$, each rotation orbit induces a cycle in $\KG(\cT_n)$.
\end{lemma}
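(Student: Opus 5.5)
The lemma has two parts: the listed triangulations are distinct, and consecutive ones are adjacent in $\KG(\cT_n)$. The key step is the second, which reduces to the claim that for every triangulation $T$, the triangulations $T$ and $r(T)$ share no diagonal. Granting this claim, the edges $r^i(T)\,r^{i+1}(T)$ of $\KG(\cT_n)$ are obtained by applying the automorphism $r^i$ to the edge $T\,r(T)$. Here $r$ preserves noncrossing and hence disjointness of diagonal sets. The closing edge $r^{\abs{\cO}-1}(T)\,T$ is covered as well, since $r^{\abs{\cO}}(T)=T$.

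For distinctness, note that the orbit of $T$ under the cyclic group generated by $r$ has size equal to the least $m\ge 1$ with $r^m(T)=T$. Therefore $T,r(T),\dots,r^{\abs{\cO}-1}(T)$ are pairwise distinct and exhaust $\cO$. Because $\abs{\cO}\ge 3$, the closed walk has at least three distinct vertices, so it is a genuine cycle and not a doubled edge.

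To prove the claim, suppose some diagonal $d=\{i,j\}$ lies in both $T$ and $r(T)$, so both $d$ and $r^{-1}(d)=\{i-1,j-1\}$ belong to $T$. I will show that $d$ and $r^{-1}(d)$ always cross, which contradicts $T$ being noncrossing. Place $i<j$ cyclically, so that $i$ and $j$ split the boundary into two open arcs. Since $d$ is a diagonal, each arc contains at least one vertex. The vertex $i-1$ lies on one arc and $j-1$ lies on the other, because shifting by one moves each endpoint off $d$ into the neighbouring arc. Here we must check that $i-1\ne j$ and $j-1\ne i$; both hold because $i-j\not\equiv\pm1$. Hence the endpoints of $r^{-1}(d)$ strictly separate the endpoints of $d$, so the two diagonals cross. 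They are also distinct, since $r^{-1}(d)=d$ would force $i-1\equiv j$. This is the only step needing real care, and I expect it to be the main obstacle: the point is to get the cyclic-interval bookkeeping right. It is the same mechanism used in the proof of \cref{lem:orbit_cycle} for $r^2$.

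Finally, the statement for $n\ge 7$ follows by combining this with \cref{lem:orbit_cycle}, which gives $\abs{\cO}\ne 2$ for such $n$. It remains to rule out $\abs{\cO}=1$. If $r(T)=T$, then $T$ shares all its diagonals with $r(T)$, which contradicts the claim, since $n-3\ge 1$. So every orbit has size at least $3$, and the first part applies.
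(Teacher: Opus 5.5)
Your proof is correct and follows the paper's argument. Both reduce the lemma to $T\cap r(T)=\emptyset$ and get a contradiction from the fact that a diagonal and its rotation by one always cross; you place $d$ and $r^{-1}(d)$ in $T$, while the paper places $d$ and $r(d)$ in $r(T)$, which is the same thing. Your additional checks fill in details the paper leaves implicit: the distinctness of the listed triangulations via the minimal period, and the exclusion of orbits of size $1$ for the $n\ge 7$ claim.
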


\begin{proof}
It suffices to show that $T\cap r(T)=\emptyset$ for every $T\in \cT_n$. Suppose instead that $d\in T\cap r(T)$. Since $d\in T$, we also have $r(d)\in r(T)$. But $d$ and $r(d)$ cross, so they cannot both belong to the noncrossing triangulation $r(T)$.
\end{proof}

Thus, for $n\ge 7$, the rotation orbits partition the vertex set of $\KG(\cT_n)$ into cycles. Equivalently, they form a canonical $2$-factor
\[
F\subseteq \KG(\cT_n).
\]

\subsection{The bridge lemma and the guide cycle}

We now explain how to merge these rotation cycles. The key local observation is that if two triangulations differ by a flip, then after rotating one of them by $\pm1$, the two become disjoint.

\begin{lemma}\label{lem:flips}
Suppose $T,T'\in \cT_n$ differ by a flip. Then
\[
T\cap r(T')=\emptyset
\qquad\text{or}\qquad
T\cap r^{-1}(T')=\emptyset.
\]
\end{lemma}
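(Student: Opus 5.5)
The plan is a direct set-theoretic computation. It rests on the fact, already used in the proof of \cref{lem:rotationorbits}, that for every diagonal $d\in\Diag(n)$ the diagonals $d$ and $r(d)$ cross. Indeed, if $d=\{i,j\}$ then $i+1$ lies strictly inside one of the two arcs cut off by $d$ and $j+1$ lies strictly inside the other, because each arc has length at least $2$. Applying this to $r^{-1}(d)$ shows that $d$ and $r^{-1}(d)$ cross as well.

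Write the flip as $T=S\cup\{e\}$ and $T'=S\cup\{f\}$, where $S$ is the common set of $n-4$ diagonals and $e\neq f$ are the two diagonals of the flipped quadrilateral. Then
\[
T\cap r(T')=\bigl(S\cap r(S)\bigr)\cup\bigl(\{e\}\cap r(S)\bigr)\cup\bigl(S\cap\{r(f)\}\bigr)\cup\bigl(\{e\}\cap\{r(f)\}\bigr),
\]
and I will eliminate the first three pieces one at a time.

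\begin{itemize}
\item If $d\in S\cap r(S)$, then $d$ and $r^{-1}(d)$ both lie in $S\subseteq T$. These two diagonals cross, which is impossible in a triangulation.
\item If $e\in r(S)$, then $r^{-1}(e)\in S\subseteq T$ and $e\in T$. Again these cross, a contradiction.
\item If $r(f)\in S$, then $r(f)$ and $f$ both lie in $T'$ and cross, a contradiction.
\end{itemize}

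Hence $T\cap r(T')\neq\emptyset$ forces $e=r(f)$. By the symmetric argument with $r^{-1}$ in place of $r$, $T\cap r^{-1}(T')\neq\emptyset$ forces $e=r^{-1}(f)$.

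It remains to show that $e=r(f)$ and $e=r^{-1}(f)$ cannot both hold. Together they give $r^2(f)=f$. For $f=\{i,j\}$ this means $\{i+2,j+2\}=\{i,j\}$ modulo $n$. Since $n\ge 5$, the option $i+2\equiv i$ is excluded, so we would need $i+2\equiv j$ and $j+2\equiv i$. This gives $4\equiv 0\pmod n$, which is impossible for $n\ge5$. So at least one of the two intersections is empty.

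The only real subtlety is this last step: the lemma is genuinely false for $n=4$, where $T=\{e\}$ and $T'=\{f\}$ with $f=r(e)=r^{-1}(e)$. The proof must therefore use $n\ge5$, which is the standing range of \cref{thm:main}, exactly there. Everything else reduces to the crossing of $d$ with $r^{\pm1}(d)$.
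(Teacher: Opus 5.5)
Your proof is correct and follows the paper's argument. The paper uses $T'\cap r(T')=\emptyset$ to show that any common diagonal must be the flipped-in diagonal, which forces $e=r(f)$ (resp.\ $e=r^{-1}(f)$), and then rules out $r(f)=r^{-1}(f)$. You are actually more careful than the paper on two points: you explicitly exclude $e\in r(S)$, a step the paper's ``so $d=r(d')$'' passes over, and you verify $r^2(f)\neq f$ for $n\ge 5$, whereas the paper asserts $r(d')\neq r^{-1}(d')$ without comment even though, as you note, it fails for $n=4$.
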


\begin{proof}
Since $T$ and $T'$ differ by one flip, we may write
\[
T=(T'-\{d'\})\cup\{d\}
\]
for some diagonals $d,d'\in \Diag(n)$. Because $r(T')$ is disjoint from $T'-\{d'\}$, any intersection between $T$ and $r(T')$ must come from the new diagonal $d$. Hence
\[
T\cap r(T')\neq\emptyset \Longrightarrow d\in r(T'),
\]
so $d=r(d')$.

Similarly,
\[
T\cap r^{-1}(T')\neq\emptyset \Longrightarrow d=r^{-1}(d').
\]
Since $r(d')\neq r^{-1}(d')$, the two possibilities cannot both occur. Therefore at least one of $T\cap r(T')$ and $T\cap r^{-1}(T')$ is empty.
\end{proof}

To connect the rotation orbits, we use a sparse guide cycle in the flip graph. Let
\[
\mathrm{HC}_{n-1}=(T_1',\dots,T_N',T_1'),
\qquad N=C_{n-3},
\]
be a Hamiltonian cycle in the flip graph of $\cT_{n-1}$.
Appending the diagonal $\{1,n-1\}$ to every $T_i'$ yields triangulations
\[
T_i\coloneqq T_i'\cup\{\{1,n-1\}\}\in \cT_n.
\]
The resulting cycle
\[
\GrayC{n}\coloneqq (T_1,\dots,T_N,T_1)
\]
lies in $\Flip(n)$ and consists entirely of triangulations containing the fixed diagonal $\{1,n-1\}$.
In particular, $\GrayC{n}$ is an independent set in $\KG(\cT_n)$.
Moreover, every rotation orbit of $\cT_n$ contains a triangulation with an ear at the vertex $n$, so every rotation orbit is met by $\GrayC{n}$.

\subsection{The auxiliary tree on rotation orbits}

Define an auxiliary rooted tree $G$ on the rotation orbits as follows. Let $\cO_1$ be the rotation orbit of $T_1$, and take $\cO_1$ as the root. While traversing $\GrayC{n}$, add the edge $(\cO,\cO')$ whenever $\GrayC{n}$ moves from a triangulation in $\cO$ to a triangulation in $\cO'$, and this is the first occurrence of $\cO'$ along the traversal.

Since every rotation orbit is met by $\GrayC{n}$ and every new edge joins a previously seen orbit to a newly seen one, $G$ is a spanning tree on the set of rotation orbits.

The role of $G$ is to record which rotation cycles of the $2$-factor $F$ we will merge directly. The next lemma shows that no orbit is asked to participate in too many such mergers.

\begin{lemma}\label{lem:degree}
For every rotation orbit $\cO\in V(G)$,
\[
\deg_G(\cO)\le \abs{\cO}.
\]
\end{lemma}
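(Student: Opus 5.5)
We bound $\deg_G(\cO)$ by counting how many times the guide cycle $\GrayC{n}$ enters or leaves $\cO$; no geometry beyond the structure of $\GrayC{n}$ is needed. In the tree $G$, an edge at $\cO$ is either the edge to its parent or an edge to one of its children. The parent edge is the unique edge created at the first arrival of $\GrayC{n}$ into $\cO$; the root $\cO_1$ has no parent. Each child edge $(\cO,\cO')$ is created by a step of the traversal that goes from a triangulation of $\cO$ directly to a triangulation of $\cO'$, and different children correspond to different such steps. Hence
\[
\deg_G(\cO)\le 1+\#\{\text{steps of }\GrayC{n}\text{ leaving }\cO\},
\]
and for the root we may drop the $1$.

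The key step is to bound the number of triangulations of $\cO$ that lie on $\GrayC{n}$. Every vertex of $\GrayC{n}$ contains the ear $\{1,n-1\}$, so we need to know how many rotations of a fixed $T\in\cO$ contain this ear. If $r^k(T)\ni\{1,n-1\}$, then $T$ contains the ear $r^{-k}(\{1,n-1\})$, which cuts off the vertex $n-k$. So the members of $\cO$ on $\GrayC{n}$ correspond to ears of $T$, counted up to the stabilizer of $T$ under rotation. A triangulation has at most $n/2$ ears, and in fact at most $\lfloor n/2\rfloor$ ears of length $2$, since two ears at adjacent vertices would cross. Dividing by the stabilizer, we want to conclude that
\[
m\coloneqq\abs{\cO\cap V(\GrayC{n})}\le \frac{\abs{\cO}}{2}.
\]
To make this precise, let $s=n/\abs{\cO}$ be the stabilizer order and let $e$ be the number of ears of $T$. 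Then $m=e/s$. Two cut-off vertices of ears cannot be adjacent, so $e\le n/2$, and therefore $m\le n/(2s)=\abs{\cO}/2$.

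Now we count. The cycle $\GrayC{n}$ visits $\cO$ in $m$ triangulations, so it leaves $\cO$ at most $m$ times. This gives
\[
\deg_G(\cO)\le 1+m\le 1+\frac{\abs{\cO}}{2}\le \abs{\cO}
\]
whenever $\abs{\cO}\ge 2$. By \cref{lem:orbit_cycle}, $\abs{\cO}\ge 3$ for $n\ge 7$. For $n=5$ there is a single orbit, so its degree is $0$.

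The main obstacle is the ear-counting bound $e\le n/2$ and its interaction with the stabilizer; this is where care is needed. Near-equality cases, such as $\abs{\cO}$ small, may also need to be checked, though the inequality $1+\abs{\cO}/2\le\abs{\cO}$ leaves slack there. If the ear bound turns out to be awkward to prove, a cruder fallback also works. Each step leaving $\cO$ starts from a distinct triangulation of $\cO$ on the cycle, so there are at most $\abs{\cO}$ such steps. The first arrival into $\cO$ (for a non-root $\cO$) is then followed by a step out of $\cO$ back to an already-seen orbit or along a flip within the same orbit. That step therefore does not produce a new child, which saves $1$ and gives $\deg_G(\cO)\le\abs{\cO}$ directly. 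The root is handled the same way, using the final return step to $T_1$.
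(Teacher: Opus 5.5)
Your main argument is correct and has the same skeleton as the paper's proof. There is one parent edge, created at the first arrival of $\GrayC{n}$ in $\cO$. Each child edge comes from a step of $\GrayC{n}$ out of a member of $\cO$. Hence $\deg_G(\cO)\le 1+m$ with $m=\abs{\cO\cap V(\GrayC{n})}$, and then $m\le \abs{\cO}/2$.

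You differ only in how you justify $m\le\abs{\cO}/2$. The paper argues in the quotient: $V(\GrayC{n})$ is independent in $\KG(\cT_n)$ because all its members contain $\{1,n-1\}$, while consecutive rotations $r^k(T),r^{k+1}(T)$ are adjacent. So the guide cycle contains at most every other vertex of the orbit cycle. You argue upstairs in $\mathbb{Z}/n$. The exponents $k$ with $\{1,n-1\}\in r^k(T)$ correspond to cut-off vertices of ears of $T$, and no two of these are adjacent. Each member of $\cO\cap V(\GrayC{n})$ accounts for exactly $s=n/\abs{\cO}$ such exponents.

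Both arguments rest on the same fact, that a diagonal $d$ and $r(d)$ cross. The paper's version is shorter because working with the orbit directly absorbs the stabilizer bookkeeping. Yours has a small advantage: it does not need the orbit to induce a genuine cycle, so it covers the size-$2$ orbit for $n=6$ uniformly. You never explicitly addressed $n=6$, so this matters. Moreover, every orbit meets $\GrayC{n}$, so $m\ge 1$, and your bound then gives $\abs{\cO}\ge 2$ for free. That is exactly the hypothesis your final inequality $1+\abs{\cO}/2\le\abs{\cO}$ needs.

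Your fallback paragraph should be dropped, because its key claim is false. You assert that the step after the first arrival in $\cO$ goes to an already-seen orbit or stays inside $\cO$. Nothing forces this: the next triangulation of $\GrayC{n}$ may lie in a brand-new orbit. That step then creates a child edge of $\cO$, nothing is saved, and you only get $\deg_G(\cO)\le 1+\abs{\cO}$.

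A correct crude fallback uses $T\cap r(T)=\emptyset$. This gives $r(T)\neq T$, and it prevents $T$ and $r(T)$ from both containing $\{1,n-1\}$. So at least one member of $\cO$ is off the guide cycle, whence $m\le\abs{\cO}-1$ and $\deg_G(\cO)\le 1+m\le\abs{\cO}$.
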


\begin{proof}
Since $\GrayC{n}$ consists entirely of triangulations containing the fixed diagonal $\{1,n-1\}$, its vertex set is an independent set in $\KG(\cT_n)$. On the other hand, by \Cref{lem:rotationorbits}, every rotation orbit induces a cycle in $\KG(\cT_n)$. Therefore
\[
\abs{\GrayC{n}\cap \cO}\le \frac{\abs{\cO}}{2}.
\]

Now an edge involving $\cO$ in the construction of $G$ is created either
\begin{itemize}[leftmargin=1.5em]
\item when the first occurrence of $\cO$ is reached, or
\item when a representative of $\cO$ is followed in $\GrayC{n}$ by the first occurrence of some other orbit.
\end{itemize}
The former can happen at most once, while the latter can happen at most $\abs{\GrayC{n}\cap \cO}$ times. Thus
\[
\deg_G(\cO)\le 1+\frac{\abs{\cO}}{2}\le \abs{\cO},
\]
since $\abs{\cO}\ge 2$. 
\end{proof}

Let $\cO_1\prec \dots \prec \cO_m$ be the ordering of the rotation orbits according to their first appearance while traversing $\GrayC{n}$. For each $i\in [2,m]$, let $f(\cO_i)$ denote the unique parent of $\cO_i$ in $G$.

\subsection{A sparse enlargement of the $2$-factor}

We now build a sparse supergraph of the $2$-factor $F$ that records one edge between each orbit and its parent.

\begin{lemma}\label{lem:almost}
Let $F$ be the $2$-factor whose components are the rotation orbits. There exists a graph
\[
F\subseteq F'\subseteq \KG(\cT_n)
\]
such that
\begin{enumerate}[label=(\alph*),leftmargin=1.5em]
\item every edge of $F'\setminus F$ joins two distinct rotation orbits;
\item $\deg_{F'}(T)\le 3$ for every $T\in \cT_n$;
\item contracting every rotation orbit in $F'$ yields the tree $G$.
\end{enumerate}
\end{lemma}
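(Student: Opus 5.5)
We build $F'$ from $F$ by adding exactly one edge of $\KG(\cT_n)$ for each tree edge $(f(\cO_i),\cO_i)$, $i\in[2,m]$. We process the orbits greedily in the order $\cO_2\prec\dots\prec\cO_m$. For each $i$, the definition of $G$ gives a step $T_j\to T_{j+1}$ of $\GrayC{n}$ with $T_j\in f(\cO_i)$ and $T_{j+1}\in\cO_i$, where $T_{j+1}$ is the first occurrence of $\cO_i$. Since $T_j$ and $T_{j+1}$ differ by a flip, \Cref{lem:flips} yields $\epsilon\in\{+1,-1\}$ with $T_j\cap r^{\epsilon}(T_{j+1})=\emptyset$. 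Because $r^{\epsilon}(T_{j+1})\in\cO_i$, the pair $\{T_j,r^{\epsilon}(T_{j+1})\}$ is an edge of $\KG(\cT_n)$ between $f(\cO_i)$ and $\cO_i$. These two orbits are distinct, since the edge arises when $\cO_i$ is first met, so the edge is not in $F$. Adding one such edge per tree edge gives (a) at once. It also gives (c), because contracting orbits produces exactly one edge per tree edge of $G$ and no other non-$F$ edges.

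The real content is (b). Every vertex has degree $2$ in $F$, so each triangulation may receive at most one added edge. The child-side endpoints cause no trouble. The endpoint $r^{\epsilon}(T_{j+1})$ lies in $\cO_i$, and $\cO_i$ receives exactly one edge as a child. We must only avoid reusing a vertex of $\cO_i$ that later serves as a parent endpoint.

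Parent-side endpoints are the problem. A representative $T_j\in\GrayC{n}\cap\cO$ may be followed by a first occurrence, so several children of $\cO$ may attach at distinct representatives $T_j$. These are distinct vertices, since each $T_j$ is followed by only one step. However, a child endpoint $r^{\epsilon}(T_{j+1})$ of $\cO$ might coincide with a parent endpoint $T_{j'}\in\cO$ used later for one of $\cO$'s own children. This collision is what must be avoided.

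To avoid it, we use the freedom in the construction. In \Cref{lem:flips} we may use either the pair $(T_j,r^{\epsilon}(T_{j+1}))$ or, symmetrically, $(r^{-\epsilon}(T_j),T_{j+1})$. That is, rotating the parent-side endpoint instead is equally valid, since $\KG(\cT_n)$ is invariant under $r$. More generally, any edge $\{r^k(T_j),r^{k+\epsilon}(T_{j+1})\}$ works. So for each tree edge we may choose a whole $r$-orbit-aligned shift $k$.

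The plan is therefore a greedy assignment. When handling the edge to $\cO_i$, choose $k$ so that the endpoint in $f(\cO_i)$ and the endpoint in $\cO_i$ are both not yet used by a previously added edge. A vertex of $\cO$ is used at most once as a child endpoint, and at most $\deg_G(\cO)-1$ times as a parent endpoint. \Cref{lem:degree} gives $\deg_G(\cO)\le\abs{\cO}$, so $\cO$ always has enough free vertices in total.

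The main obstacle is performing this choice simultaneously on both sides with a single shift $k$. Fixing $k$ determines both endpoints at once: the shift ranges over $\abs{f(\cO_i)}$ values on one side and over $\abs{\cO_i}$ values on the other, with periodicity tied to the respective orbit sizes. Two remedies handle this. First, when $\cO_i$ is processed it is untouched, so its endpoint is automatically free. Second, on the parent side at most $\deg_G(f(\cO_i))-1<\abs{f(\cO_i)}$ vertices are currently occupied. Hence some shift $k$ leaves the parent endpoint free, and then any $k$ is fine for the child. This is the step I would verify carefully, together with the count in \Cref{lem:degree}.
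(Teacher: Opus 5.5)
Your proposal is correct and follows essentially the same route as the paper. You process the orbits in first-appearance order and take the bridge $\{T_j, r^{\epsilon}(T_{j+1})\}$ from \Cref{lem:flips}. You then rotate both endpoints by a common shift so that the endpoint in $f(\cO_i)$ is unused; this is possible because at most $\deg_G(f(\cO_i))-1\le\abs{f(\cO_i)}-1$ of its vertices are already taken, by \Cref{lem:degree}. The endpoint in $\cO_i$ is automatically free because that orbit has not been touched yet.

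As a side remark, the collision you worry about cannot occur for unshifted bridges. The child endpoint $r^{\pm1}(T_{j+1})$ contains $\{2,n\}$ or $\{n-2,n\}$, and both of these cross $\{1,n-1\}$, so child endpoints never lie on $\GrayC{n}$. Your shifting argument makes this observation unnecessary anyway.
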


\begin{proof}
It is enough to choose, for each $i\in [2,m]$, an edge
\[
e_i\in E_{\KG(\cT_n)}\bigl(f(\cO_i),\cO_i\bigr)
\]
such that if
\[
F_i\coloneqq F+\{e_2,\dots,e_i\},
\]
then $\deg_{F_i}(T)\le 3$ for every triangulation $T$.

We proceed by induction on $i\ge 1$. For $i=1$ there is nothing to prove, since $F$ is a $2$-factor. Assume now that the statement holds for $i-1\ge 1$.

Because $(f(\cO_i),\cO_i)\in E(G)$, there exist triangulations $T\in f(\cO_i),\qquad T'\in \cO_i$ such that $T$ and $T'$ differ by one flip. By \Cref{lem:flips}, there exists $\epsilon\in\{-1,1\}$ such that
\[
S\coloneqq r^\epsilon(T')\in \cO_i
\]
is disjoint from $T$. Hence $\set{T,S}$ is an edge of $\KG(\cT_n)$ between the two required rotation orbits.

Now rotate both endpoints simultaneously. For any integer $\sigma$, the edge $\set{r^\sigma(T), r^\sigma(S)}$ is again an edge between the same two rotation orbits. By \Cref{lem:degree}, the parent orbit $f(\cO_i)$ is incident in $G$ with at most $\abs{f(\cO_i)}$ edges, so among the edges $e_2,\dots,e_{i-1}$ there are at most $\abs{f(\cO_i)}-1$ already incident with representatives of $f(\cO_i)$. Therefore, by choosing $\sigma$ appropriately, we may arrange that adding $e_i\coloneqq \set{r^\sigma(T), r^\sigma(S)}$ does not increase the degree of any vertex in $f(\cO_i)$ above $3$. Since $\cO_i$ is being encountered for the first time, no previously chosen edge is incident with a representative of $\cO_i$, so the same holds for $\cO_i$. This completes the induction.
\end{proof}

\begin{figure}[htb!]
    \centering
    \includegraphics[width=0.7\linewidth]{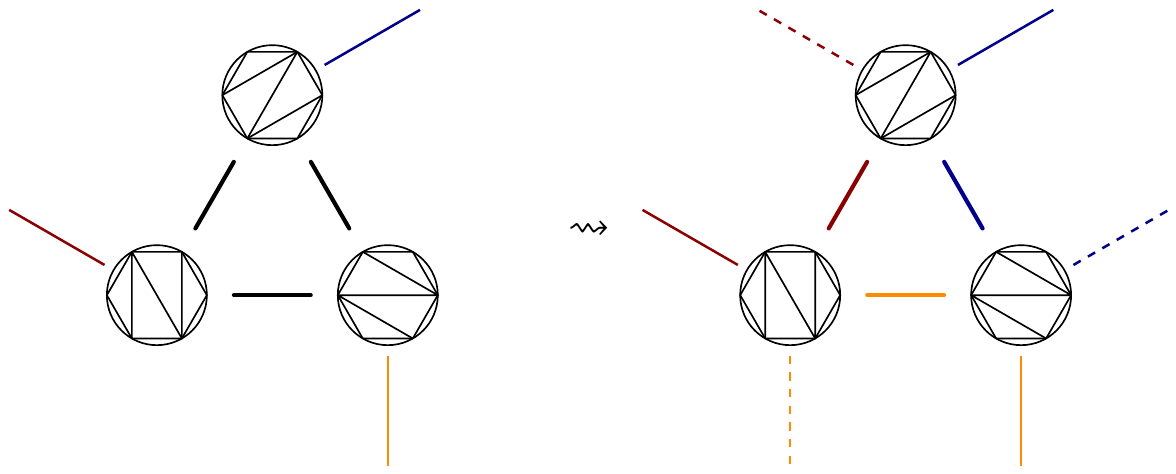}
    \caption{Left: A rotation orbit $\cO$ with $\abs{\cO} \geq 3$, with three bridges to other orbits. Right: the ``rotated'' bridge-copies dashed and the orbit edge in the color of the edge occupying it.}
    \label{fig:good-case}
\end{figure}
\subsection{The final splicing argument}

We are now ready to merge the rotation cycles one by one.

\begin{proof}[Proof of \Cref{thm:main}]
We have already noted that the theorem holds for $n=5$, and we will postpone the case $n=6$ to the end. Assume therefore that $n\ge 7$.

Let
\[
F'=F+\{e_2,\dots,e_m\}
\]
be a graph as in \Cref{lem:almost}. For $k=2,\dots,m$, write
\[
e_k=\{S,T\},
\qquad S\in f(\cO_k),\quad T\in \cO_k.
\]
We say that the orbit-edges
\[
\{S,r(S)\}
\qquad\text{and}\qquad
\{T,r(T)\}
\]
are \emph{occupied} by $e_k$. By \Cref{lem:almost} and \Cref{lem:rotationorbits}, every edge of $F$ is occupied by at most one edge of $F'\setminus F$, see \Cref{fig:good-case}.

 Furthermore, for $j\in [m]$, call $\set{S, r(S)}$ \emph{$j$-active} if 
\[
f(\cO_k)\preceq \cO_j \prec \cO_k.
\]

We claim that for every $i\in [m]$ there exists a cycle
\[
C_i\subseteq \KG(\cT_n)
\]
spanning $\cO_1\cup \cdots \cup \cO_i$ and containing all $i$-active edges. The desired Hamiltonian cycle is then $C_m$.

The proof is by induction on $i$. For $i=1$, take the canonical cycle spanned by $\cO_1$.

Now let $i\ge 2$ and assume that there exists a cycle $C_{i-1}\subseteq \KG(\cT_n)$ spanning $\cO_1\cup \cdots \cup \cO_{i-1}$ and containing all $(i-1)$-active edges. Write
\[
e_i=\{S,T\},
\qquad S\in f(\cO_i),\quad T\in \cO_i.
\]
By definition, the edge $\{S,r(S)\}$ is $(i-1)$-active and thus belongs to $C_{i-1}$.
Construct $C_i$ by replacing the edge $\set{S,r(S)}$ in $C_{i-1}$ with the path
\[
S,\ T,\ r^{-1}(T),\ \dots,\ r^{-(\abs{\cO_i}-1)}(T)=r(T),\ r(S),
\]
which traverses the entire cycle on the orbit $\cO_i$ except for the occupied edge $\{T,r(T)\}$.
Clearly, $C_i$ spans $\cO_1\cup \cdots \cup \cO_i$.

It remains to verify that $C_i$ contains all $i$-active edges. The only $(i-1)$-active edge that ceases to be active at stage $i$ is precisely $\{S,r(S)\}$, which was replaced. Moreover, the edge $\{T,r(T)\}$ is occupied by $e_i$ itself, and hence is not $i$-active. All other edges on the cycle spanned by $\cO_i$ remain in $C_i$. Therefore $C_i$ contains all $i$-active edges, completing the induction.

At the end of the process, $C_m$ spans all rotation orbits and hence all of $\cT_n$. Therefore $C_m$ is a Hamiltonian cycle in $\KG(\cT_n)$.

\begin{figure}[htb!]
    \centering
    \includegraphics[width=0.6\linewidth]{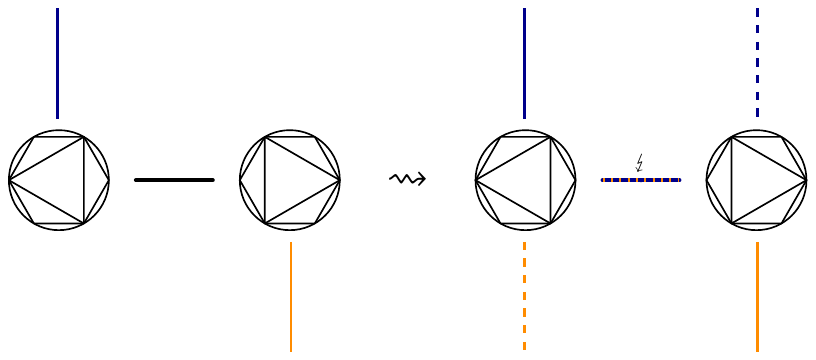}
    \caption{The orbit of ``all-ears'' triangulations of $n =6$ and why the orbit edge gets occupied more than once if each triangulation has degree two.}
    \label{fig:bad_case}
\end{figure}

Finally, for $n = 6$, let $\cO = \set{T_1, T_2}$ be the unique orbit of size two, i.e. where the triangulations only contain ears. In particular, $F \setminus \cO$ remains a $2$-factor. A careful reading of \Cref{lem:almost} gives that the degree increases at most by one when going from $F$ to $F'$, so $\deg_{F'}(T_1), \deg_{F'}(T_2) \leq 2$. Note that, as illustrated in \Cref{fig:bad_case}, the above construction only fails when $\deg_{F'}(T_1) = \deg_{F'}(T_2) = 2$ as the edge $\set{T_1, T_2}$ gets occupied by multiple edges. To avoid this, choose the starting vertex of $\GrayC{6}$ such that the unique\footnote{Recall that $\GrayC{n}$ is an independent set in $\KG\of{\cT_n}$.} representative of $\cO$ is the last element. Therefore, $\cO$ will never serve as a parent in $G$, so we get using a similar argument as in \cref{lem:degree} that $\deg_G(\cO) = 1$. In particular, \cref{lem:almost} now yields $F'$ such that $\min\set{\deg_{F'}(T_1),  \deg_{F'}(T_2)} = 1$, so $\set{T_1, T_2}$ gets only occupied once. Hence, for the remainder, we may proceed as for $n \geq 7$.
\end{proof}

\section{The permutohedron}\label{sec:perm}

In this section, we prove Theorem \ref{thm:permutohedron}. However, we shall begin with a warm-up proof that $\KG(\Perm_n)$ is Hamiltonian (the case $k=1$ of Theorem \ref{thm:permutohedron}), using the same general philosophy as in the proof of \Cref{thm:main}. First, recall that the facets of $\Perm_n$ are indexed by the nonempty proper subsets $\emptyset\neq S\subsetneq [n]$, where the facet $f_S$ consists of those permutations $\sigma\in S_n$ such that $\sigma(S)=[|S|]$. Hence two permutations $\sigma,\tau\in S_n$ are adjacent in $\KG(\Perm_n)$ if and only if there does not exist $k\in [n-1]$ such that $\sigma^{-1}([k])=\tau^{-1}([k])$.

 To show that $\KG(\Perm_n)$ is Hamiltonian, we shall again decompose the graph into subgraphs, and then use a Hamiltonian cycle in an auxiliary graph to glue a cycle in each subgraph together. The first observation is that in the case of $\KG(\Perm_n)$ the subgraphs can be chosen to be cliques. 

\begin{lemma}\label{lem:perm-cliques}
Let $\rho=(1\,2\,\cdots\,n)\in S_n$ and consider $H = \langle \rho \rangle$. The right cosets $H\sigma$ partition the vertices of $\KG(\Perm_n)$ into cliques.
\end{lemma}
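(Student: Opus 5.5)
The plan is to check directly that any two distinct elements of the same right coset satisfy the adjacency criterion recorded just before the lemma. Two permutations are adjacent in $\KG(\Perm_n)$ if and only if there is no $k\in[n-1]$ with $\sigma^{-1}([k])=\tau^{-1}([k])$. Since the right cosets of $H$ partition $S_n$, this is the only thing to verify.

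First I reduce to a statement about $\rho$ alone. Fix a coset $H\sigma$ and take two of its elements $\rho^a\sigma$ and $\rho^b\sigma$ with $a\not\equiv b \pmod n$. Then
\[
(\rho^a\sigma)^{-1}([k])=\sigma^{-1}\bigl(\rho^{-a}([k])\bigr),
\]
and similarly for $b$. Since $\sigma^{-1}$ is a bijection, these two sets coincide if and only if $\rho^{-a}([k])=\rho^{-b}([k])$. Applying $\rho^{a}$ to both sides, this is equivalent to $\rho^{c}([k])=[k]$, where $c=a-b\not\equiv 0 \pmod n$. So it suffices to show that for $1\le k\le n-1$ and $c\not\equiv 0 \pmod n$ we have $\rho^c([k])\ne[k]$.

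Next I prove this claim using the cyclic-interval structure. The map $\rho^c$ sends $i\mapsto i+c \pmod n$, so $\rho^c([k])$ is the cyclic interval $\{1+c,\dots,k+c\}$. Because $1\le k\le n-1$, the set $[k]$ is a proper nonempty cyclic interval. Such an interval has a unique \emph{starting point}: an element whose cyclic predecessor does not lie in the set. For $[k]$ this is $1$, since $n\notin[k]$ while $j-1\in[k]$ for every $2\le j\le k$.

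Rotation preserves the starting point structure, so the starting point of $\rho^c([k])$ is $1+c$. If $\rho^c([k])=[k]$, the two starting points must agree, giving $1+c\equiv 1$, i.e.\ $c\equiv 0 \pmod n$, a contradiction. Hence any two distinct elements of a coset are adjacent in $\KG(\Perm_n)$, so each coset (of size $n$) induces a clique.

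There is no real obstacle here. The only points needing care are the composition convention, namely that right cosets $H\sigma$ turn into left action on the sets $[k]$ after inverting, and the need for $k\le n-1$ so that $[k]$ is a proper interval.
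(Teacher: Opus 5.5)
Your proof is correct and follows essentially the same route as the paper: you reduce non-adjacency of two elements of a coset to $\rho^{c}([k])=[k]$ for some $k\in[n-1]$ and $c\not\equiv 0 \pmod n$, and then rule this out. The one addition is your starting-point argument, which justifies the final step $\rho^{c}([k])\neq[k]$ that the paper simply asserts as a contradiction.
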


\begin{proof}
Suppose, for contradiction, that there exist $\sigma\in S_n$ and $i\in [n-1]$ such that $\sigma$ and $\rho^i\sigma$ are not adjacent in $\KG(\Perm_n)$. Then there exists $k\in [n-1]$ such that $\sigma^{-1}([k])=(\rho^i\sigma)^{-1}([k])=\sigma^{-1}\rho^{-i}([k])$ and so $\rho^i([k]) = [k]$, a contradiction. 
\end{proof}

Let $\mathcal{C}_1,\dots,\mathcal{C}_N$ denote the right cosets of $H$, where $N=(n-1)!$.

\begin{lemma}\label{lem:perm-representative}
For each right coset $\mathcal{C}$, there exists a unique permutation $\pi\in \mathcal{C}$ such that $\pi(n)=n$.
\end{lemma}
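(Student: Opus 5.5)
The plan is to compute the value at $n$ of every element of the coset directly. A right coset has the form $\mathcal{C}=H\sigma=\{\rho^i\sigma : 0\le i\le n-1\}$ for some $\sigma\in S_n$, and it has exactly $n$ elements, since $H=\langle\rho\rangle$ is cyclic of order $n$ and left multiplication by $\sigma$ is injective. So I will show that, as $i$ runs over $\{0,\dots,n-1\}$, the values $(\rho^i\sigma)(n)$ are pairwise distinct and hence run over all of $[n]$.

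The key step is the identity $(\rho^i\sigma)(n)=\rho^i(\sigma(n))$, which follows from composing from the right, consistent with the convention $(\rho^i\sigma)^{-1}=\sigma^{-1}\rho^{-i}$ used in \Cref{lem:perm-cliques}. Write $a=\sigma(n)$. Since $\rho=(1\,2\,\cdots\,n)$ is an $n$-cycle, the map $i\mapsto\rho^i(a)$ is a bijection from $\{0,1,\dots,n-1\}$ onto $[n]$; explicitly, $\rho^i(a)\equiv a+i \pmod n$, with representatives taken in $[n]$.

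It follows that there is exactly one $i\in\{0,\dots,n-1\}$ with $\rho^i(a)=n$, namely $i\equiv n-a\pmod n$. Then $\pi=\rho^i\sigma$ is the unique element of $\mathcal{C}$ with $\pi(n)=n$. Uniqueness uses that distinct $i$ in this range give distinct elements $\rho^i\sigma$, since $H$ acts freely.

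No step here is a genuine obstacle. The only point needing care is to keep the composition convention consistent with the one in \Cref{lem:perm-cliques}, so that $\rho$ acts on the values of $\sigma$ rather than on its arguments. As a consistency check, the $(n-1)!$ cosets then correspond bijectively to the permutations fixing $n$, which matches $N=(n-1)!$.
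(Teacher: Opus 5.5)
Your proof is correct and is the paper's argument: the coset elements $\rho^i\sigma$ send $n$ to $\rho^i(\sigma(n))$, and exactly one exponent, $i\equiv n-\sigma(n) \pmod n$, gives the value $n$. One small wording slip: the map $h\mapsto h\sigma$ is right multiplication by $\sigma$, not left, though this does not affect the argument.
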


\begin{proof}
Fix $\sigma\in \mathcal{C}$ and let $i=\sigma(n)$. The elements of $\mathcal{C}$ are $\sigma,\rho\sigma,\rho^2\sigma,\dots,\rho^{n-1}\sigma$. Hence $\rho^{n-i}\sigma$ is the unique element of $\mathcal{C}$ satisfying $\rho^{n-i}\sigma(n)=\rho^{n-i}(i)=n$. 
\end{proof}

We call the unique permutation $\pi\in\mathcal{C}$ with $\pi(n)=n$ the \emph{marking permutation} of $\mathcal{C}$. The set of marking permutations is naturally identified with $S_{n-1}$. Let $\pi_1,\pi_2,\dots,\pi_N$
be a Hamiltonian cycle in the $1$-skeleton of $\Perm_{n-1}$. Such a cycle exists by the Steinhaus-Johnson-Trotter algorithm \cite{Steinhaus1964, Johnson1963, Trotter1962}. 

For each $i\in [N]$, let $\mathcal{C}_i$ be the right coset containing $\pi_i$. Since $\pi_i(n)=n$, the permutation $\sigma_i\coloneqq \rho\pi_i$
lies in the same clique $\mathcal{C}_i$ and satisfies $\sigma_i(n)=1.$
We claim that $\sigma_i$ is adjacent to $\pi_{i+1}$. Indeed, for every $k\in [n-1]$ we have $n\in \sigma_i^{-1}([k])$, because $\sigma_i(n)=1\in [k]$.
On the other hand, $n\notin \pi_{i+1}^{-1}([k])$, because $\pi_{i+1}(n)=n\notin [k]$. Thus $\sigma_i^{-1}([k])\neq \pi_{i+1}^{-1}([k])$ for all $k\in [n-1]$ and therefore $\sigma_i$ and $\pi_{i+1}$ are adjacent in $\KG(\Perm_n)$.

Now enumerate each clique $\mathcal{C}_i$ by a path $\alpha_i$ that starts at $\pi_i$ and ends at $\sigma_i$. Since $\mathcal{C}_i$ is a clique, such a path spanning $\mathcal{C}_i$ exists. Concatenating $\alpha_1,\alpha_2,\dots,\alpha_N$ yields a Hamiltonian cycle in $\KG(\Perm_n)$. \qed

\smallskip

The argument above is very much in the spirit of \cite{ChenFuredi2002, BellmannSchuelke2021}: we start with a clique factor and then glue the cliques $\mathcal{C}_i$ using the Hamiltonian cycle (in the $1$-skeleton of $\Perm_{n}$) on the marking permutations. 
However, this method seems to only show Hamiltonicity. To obtain the $k$th power of a Hamiltonian cycle, one would need much more than a single bridge between consecutive cliques: one would need a whole family of compatible bridges guaranteeing that the last $k$ vertices of one clique are adjacent to the first $k$ vertices of the next. We do not see a natural way to force such a structure from the clique-splicing argument. Instead, the higher-power statement follows much more naturally from density considerations.

A permutation $\sigma\in S_n$ is \emph{decomposable} if there exists $p\in [n-1]$ such that $\sigma([p])=[p]$, and \emph{indecomposable} otherwise. Let $\mathcal{I}_n$ denote the set of indecomposable permutations. The \emph{Cayley graph} of $S_n$ generated by a set of permutations $I\subset S_n$ is the graph $\Cay(S_n,I)$ whose vertices are the elements of $S_n$ and where $\sigma$ and $\tau$ form an edge if and only if $\tau^{-1}\sigma \in I$. We record a standard observation that $\KG(\Perm_n)$ is the Cayley graph of $S_n$ generated by the indecomposable permutations. 

\begin{lemma}\label{lem:cayley}
\[
\KG(\Perm_n)=\Cay(S_n,\mathcal{I}_n).
\]
\end{lemma}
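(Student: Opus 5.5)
To prove $\KG(\Perm_n)=\Cay(S_n,\mathcal{I}_n)$ we compare edge sets directly, since both graphs have vertex set $S_n$. We use the adjacency criterion stated at the beginning of this section: $\sigma$ and $\tau$ are adjacent in $\KG(\Perm_n)$ if and only if there is no $k\in[n-1]$ with $\sigma^{-1}([k])=\tau^{-1}([k])$. In the Cayley graph, $\sigma\tau$ is an edge if and only if $\tau^{-1}\sigma\in\mathcal{I}_n$. So it suffices to show, for all $\sigma,\tau\in S_n$ and each fixed $k\in[n-1]$,
\[
\sigma^{-1}([k])=\tau^{-1}([k]) \iff \pi([k])=[k] \text{ for some suitable } \pi \text{ built from } \sigma,\tau.
\]
Here $\pi$ should be $\tau^{-1}\sigma$ or its inverse, up to the composition convention.

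The key computation runs as follows. If $\sigma^{-1}([k])=\tau^{-1}([k])$, apply $\sigma$ to both sides to get $[k]=\sigma\tau^{-1}([k])$. Thus $\sigma\tau^{-1}$ fixes $[k]$ setwise. The reverse implication is the same computation read backwards, applying $\sigma^{-1}$. Taking the negation over all $k$, $\sigma$ and $\tau$ are adjacent in $\KG(\Perm_n)$ exactly when $\sigma\tau^{-1}$ is indecomposable.

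The main obstacle is the convention mismatch: the Cayley graph as defined uses $\tau^{-1}\sigma$, not $\sigma\tau^{-1}$. I would handle it in two steps.

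First, a permutation $\pi$ satisfies $\pi([p])=[p]$ if and only if $\pi^{-1}([p])=[p]$. Hence $\mathcal{I}_n$ is closed under inversion, and therefore the Cayley graph is undirected and well defined.

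Second, the sets of permutations composed by right versus left multiplication must be matched consistently with how $\sigma^{-1}([k])$ is interpreted. If the paper's facet convention is $\sigma(S)=[|S|]$, then $\sigma^{-1}([k])$ is the set $S$, and the computation above yields $\sigma\tau^{-1}$. In that case I would either note that one may equivalently use $\Cay(S_n,\mathcal I_n)$ with the conjugated convention, or verify directly via the map $\sigma\mapsto\sigma^{-1}$. This map is a graph isomorphism, and it intertwines $\sigma\tau^{-1}$ with $\tau'^{-1}\sigma'$ for $\sigma'=\sigma^{-1}$ and $\tau'=\tau^{-1}$. I would carefully fix the identification of vertices with permutations so that the stated equality holds literally, and otherwise state the equality up to this canonical isomorphism, which is all that is needed for Hamiltonicity questions.
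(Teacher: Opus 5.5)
Your proposal is correct and takes the same route as the paper: rewrite ``$\sigma,\tau$ share a facet'' as $\sigma^{-1}([k])=\tau^{-1}([k])$ for some $k\in[n-1]$, then as a setwise-fixing condition on a single permutation. Your version of that computation is the accurate one, and the paper's is not.

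The paper passes directly to $(\tau^{-1}\sigma)([p])=[p]$. But the section composes right to left, as in $\rho^{n-i}\sigma(n)=\rho^{n-i}(i)$ in Lemma~\ref{lem:perm-representative}. Under that convention, $\sigma^{-1}([p])=\tau^{-1}([p])$ is equivalent to $(\sigma\tau^{-1})([p])=[p]$, exactly as you found.

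With these conventions the literal equality actually fails. Take $n=3$ and the vertices $(3,1,2)$ and $(2,1,3)$, i.e.\ $\sigma=(1\,3\,2)$ and $\tau=(1\,2)$. Both lie on the facet $x_2\ge 1$, so they are not adjacent in $\KG(\Perm_3)$. Yet $\tau^{-1}\sigma=(1\,3)$ is indecomposable, so they are adjacent in $\Cay(S_3,\mathcal I_3)$. Meanwhile $\sigma\tau^{-1}=(2\,3)$ is decomposable, consistent with your criterion.

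So do not hedge; commit to what your argument proves:
\begin{itemize}
\item $\sigma\sim\tau$ in $\KG(\Perm_n)$ if and only if $\sigma\tau^{-1}\in\mathcal I_n$.
\item Since $\mathcal I_n$ is closed under inversion, $\sigma\mapsto\sigma^{-1}$ is an isomorphism onto $\Cay(S_n,\mathcal I_n)$ as defined.
\item Literal equality holds only if $\tau^{-1}\sigma$ is read left to right, or if the vertex $(\sigma(1),\dots,\sigma(n))$ is labelled by $\sigma^{-1}$.
\end{itemize}
This is all the proof of Theorem~\ref{thm:permutohedron} needs, namely that $\KG(\Perm_n)$ is $|\mathcal I_n|$-regular.
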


\begin{proof}
By the facet description above, two permutations $\sigma,\tau\in S_n$ are non-adjacent in $\KG(\Perm_n)$ if and only if $\sigma^{-1}([p])=\tau^{-1}([p])$ for some $p\in [n-1]$. Equivalently, $(\tau^{-1}\sigma)([p])=[p]$, that is, $\tau^{-1}\sigma$ is decomposable.
Hence $\sigma$ and $\tau$ are adjacent if and only if $\tau^{-1}\sigma$ is indecomposable.
\end{proof}

We record the following standard estimate for the number of indecomposable permutations.

\begin{lemma}\label{lem:indecomp}
We have
\[
|\mathcal{I}_n|
=
\left(1-O\!\left(\frac{1}{n}\right)\right)n!.
\]
\end{lemma}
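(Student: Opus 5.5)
We need to show $n! - |\mathcal{I}_n| = O((n-1)!)$, i.e. the number of decomposable permutations is $O((n-1)!)$. The plan is a union bound over the possible prefix blocks $[p]$, followed by an elementary estimate of a sum of factorial products.

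First, the union bound. A permutation $\sigma$ is decomposable exactly when $\sigma([p])=[p]$ for some $p\in[n-1]$. For a fixed $p$, the permutations with $\sigma([p])=[p]$ are precisely those that permute $[p]$ and $[n]\setminus[p]$ separately, so there are $p!\,(n-p)!$ of them. Summing over $p$ gives
\[
n!-|\mathcal{I}_n| \le \sum_{p=1}^{n-1} p!\,(n-p)! .
\]

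Second, we bound this sum. Dividing by $n!$ gives
\[
\frac{n!-|\mathcal{I}_n|}{n!} \le \sum_{p=1}^{n-1}\binom{n}{p}^{-1}.
\]
The terms $p=1$ and $p=n-1$ each contribute $1/n$. The terms $p=2$ and $p=n-2$ each contribute $\frac{2}{n(n-1)}$. For $3\le p\le n-3$ (assuming $n\ge 6$), we have $\binom{n}{p}\ge\binom{n}{3}$, so these at most $n$ terms contribute a total of at most
\[
n\binom{n}{3}^{-1}=\frac{6}{(n-1)(n-2)}=O(n^{-2}).
\]
Hence the whole sum is $\frac{2}{n}+O(n^{-2})=O(1/n)$. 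This yields
\[
|\mathcal{I}_n|\ge \bigl(1-O(1/n)\bigr)n!,
\]
and the upper bound $|\mathcal{I}_n|\le n!$ is trivial. For small $n$, the bound holds after enlarging the implied constant.

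The only mild obstacle is the middle range of the sum. Bounding those terms uniformly by $\binom{n}{3}^{-1}$, using unimodality of the binomial coefficients, settles it without any finer asymptotics. One could alternatively use the exact recursion $n!=\sum_{p} |\mathcal{I}_p|\,(n-p)!$, which comes from the minimal $p$ with $\sigma([p])=[p]$. That recursion is unnecessary here, though, since only an upper bound on the decomposable permutations is needed.
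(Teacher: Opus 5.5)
Your proof is correct and follows the same route as the paper. The paper only records the union-bound inequality $n!-|\mathcal{I}_n|\le n!\sum_{p=1}^{n-1}\binom{n}{p}^{-1}$ and cites Comtet; you derive that inequality directly and then supply the elementary estimate $\sum_{p=1}^{n-1}\binom{n}{p}^{-1}=\frac{2}{n}+O(n^{-2})$, which the paper leaves implicit.
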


In fact, the following more precse inequality holds:
\[
n!-|\mathcal{I}_n|
\le
n!\sum_{p=1}^{n-1}\binom{n}{p}^{-1}
\]
See for example \cite{Comtet1972}. Using this, we can now easily prove \Cref{thm:permutohedron}.

\begin{proof}[Proof of \Cref{thm:permutohedron}]
By \Cref{lem:cayley}, the graph $\KG(\Perm_n)$ is the Cayley graph $\Cay(S_n,\mathcal{I}_n)$, so every vertex has degree $|\mathcal{I}_n|$. By \Cref{lem:indecomp},
\[
\delta\!\bigl(\KG(\Perm_n)\bigr)
=
|\mathcal{I}_n|
=
\left(1-O\!\left(\frac{1}{n}\right)\right)n!.
\]
Fix $k\ge 1$. For all sufficiently large $n$, this minimum degree is larger than $\frac{k}{k+1}\,n!$.
The theorem of Koml\'os, S\'ark\"ozy, and Szemer\'edi \cite{KomlosSarkozySzemeredi1998} therefore implies that $\KG(\Perm_n)$ contains the $k$th power of a Hamiltonian cycle.
\end{proof}

\section{Concluding remarks}

The proof of \Cref{thm:main} and the work by Merino-M\"utze-Namrata from \cite{MerinoMutzeNamrata2023} together suggest a general paradigm for establishing Hamiltonicity in Kneser graphs $\KG(\mathcal{F})$ of various set systems $\mathcal{F}$. The key ingredients are:
\begin{itemize}[leftmargin=1.5em]
    \item a decomposition of the graph into cycles arising from symmetry,
    \item a sparse guide cycle in an auxiliary graph on $\mathcal{F}$ that meets every component of the decomposition,
    \item and a local bridge phenomenon allowing one to convert transitions in the guide cycle into edges of the target graph.
\end{itemize}
A compatible choice of such bridges then allows one to splice all cycles into a single Hamiltonian cycle. In fact, in our proof for \Cref{thm:main}, a Hamiltonian path in the auxiliary graph would already be sufficient; the cyclic formulation is simply the most natural one.

It is natural to ask how far this paradigm extends. For example, a particularly appealing setting is that of graph associahedra \cite{CarrDevadoss2006,Dev09}. For any graph $G$, the $G$-associahedron $\Assoc(G)$ is a simple polytope whose vertices correspond to maximal tubings on $G$. When $G$ is a path, this is the standard associahedron, and when $G$ is the complete graph this recovers the permutohedron. In a recent paper, Manneville and Pilaud \cite{MannevillePilaud2016} showed that the $1$-skeleton of $\Assoc(G)$ is Hamiltonian for every graph $G$ with at least two edges. 
This naturally leads to the following question.

\begin{question}
    For which graphs $G$ is the Kneser graph $\KG(\Assoc(G))$ Hamiltonian?
\end{question}

\medskip

Theorem~\ref{thm:main} resolves this question when $G$ is a path, while \Cref{sec:perm} addresses the complete graph case. We believe that the strategy developed here should extend to a much broader class of graphs. For example, when $G$ is a cycle, the corresponding polytope is the cyclohedron, whose vertices admit a natural interpretation in terms of centrally symmetric triangulations of a convex polygon. In this setting, the Hamiltonian cycle of the $1$-skeleton translates to such a cycle for the flip graph\footnote{Though more than one diagonal may get flipped to preserve being centrally symmetric.}, and the same orbit-decomposition and bridge argument appears to carry over with only minor modifications.

At the opposite extreme, when $G$ is dense, one can often establish Hamiltonicity of $\KG(\Assoc(G))$ by much simpler means. Indeed, for sufficiently dense graphs $G$, the resulting Kneser graph is itself very dense, and Dirac-type arguments imply Hamiltonicity directly. For instance, when $G=K_n$ one recovers the permutohedron, and the Hamiltonicity of $\KG(\Perm_n)$ follows from a straightforward density argument. More generally, if $G$ is obtained from $K_n$ by deleting a sparse set of edges (for example, a perfect matching), then $\KG(\Assoc(G))$ still has very large minimum degree, and the same approach applies.

These two extremes, highly structured sparse graphs such as paths and cycles, and highly connected dense graphs, suggest that Hamiltonicity of $\KG(\Assoc(G))$ might hold in considerable generality. It would be interesting to determine whether a unified argument exists, or whether fundamentally different mechanisms are required in the sparse and dense regimes.

\section*{Acknowledgments}
We thank Daniel Zhu for useful discussions in the initial stages of this project. C.P.\ was supported by NSF grant DMS-2246659.


\begin{thebibliography}{16}
\bibitem{BellmannSchuelke2021}
J. Bellmann and B. Sch\"ulke,
\emph{Short proof that Kneser graphs are Hamiltonian for $n\geq4k$}, Discrete Math. {\bf 344} (2021), no.~7, Paper No. 112430, 2 pp.; MR4247012

\bibitem{CarrDevadoss2006}
M.~P. Carr and S.~L. Devadoss,
\emph{Coxeter complexes and graph-associahedra}, Topology Appl. {\bf 153} (2006), no.~12, 2155--2168; MR2239078

\bibitem{ChenLih1987}
B.-L. Chen and K.-W. Lih,
\emph{Hamiltonian uniform subset graphs}, J. Combin. Theory Ser. B {\bf 42} (1987), no.~3, 257--263; MR0888679


\bibitem{Chen2000}
Y.-C. Chen, 
\emph{Kneser graphs are Hamiltonian for $n\geq 3k$}, J. Combin. Theory Ser. B {\bf 80} (2000), no.~1, 69--79; MR1778200

\bibitem{Chen2003}
Y.-C. Chen, 
\emph{Triangle-free Hamiltonian Kneser graphs}, J. Combin. Theory Ser. B {\bf 89} (2003), no.~1, 1--16; MR1999733

\bibitem{ChenFuredi2002}
Y.-C. Chen and Z. F\"uredi, 
\emph{Hamiltonian Kneser graphs}, Combinatorica {\bf 22} (2002), no.~1, 147--149; MR1883565

\bibitem{ChristofidesHladkyMathe2014}
D. Christofides, J. Hladk\'y{} and A. M\'ath\'e, \emph{Hamilton cycles in dense vertex-transitive graphs}, J. Combin. Theory Ser. B {\bf 109} (2014), 34--72; MR3269902

\bibitem{Comtet1972}
L. Comtet,
\emph{Sur les coefficients de l'inverse de la s\'erie formelle $\sum n!t\sp{n}$}, C. R. Acad. Sci. Paris S\'er. A-B {\bf 275} (1972), {\rm A}569--{\rm A}572; MR0302457

\bibitem{Dev09} 
S.~L. Devadoss, \emph{A realization of graph associahedra}, Discrete Math. {\bf 309} (2009), no.~1, 271--276; MR2479448

\bibitem{EKR1961}
P. Erd\H os, C. Ko and R. Rado, 
\emph{Intersection theorems for systems of finite sets}, Quart. J. Math. Oxford Ser. (2) {\bf 12} (1961), 313--320; MR0140419


\bibitem{HeinrichWallis1978}
K. Heinrich and W.~D. Wallis,
\emph{Hamiltonian cycles in certain graphs}, J. Austral. Math. Soc. Ser. A {\bf 26} (1978), no.~1, 89--98; MR0510592

\bibitem{HurtadoNoy1999}
F. Hurtado and M. Noy, 
\emph{Graph of triangulations of a convex polygon and tree of triangulations}, Comput. Geom. {\bf 13} (1999), no.~3, 179--188; MR1723053

\bibitem{Johnson1963}
S.~M. Johnson, \emph{Generation of permutations by adjacent transposition}, Math. Comp. {\bf 17} (1963), 282--285; MR0159764

\bibitem{King2006}
A.~D. King, \emph{Generating indecomposable permutations}, Discrete Math. {\bf 306} (2006), no.~5, 508--518; MR2212519

\bibitem{KomlosSarkozySzemeredi1998}
J. Koml\'os, G.~N. S\'ark\"ozy and E. Szemer\'edi, \emph{Proof of the Seymour conjecture for large graphs}, Ann. Comb. {\bf 2} (1998), no.~1, 43--60; MR1682919

\bibitem{Lovasz1970}
L. Lov\'asz,
\emph{Problem 11},
in Combinatorial Structures and their Applications (1970).

\bibitem{Lovasz1978}
L. Lov\'asz, \emph{Kneser's conjecture, chromatic number, and homotopy}, J. Combin. Theory Ser. A {\bf 25} (1978), no.~3, 319--324; MR0514625

\bibitem{Lucas1987}
J.~M. Lucas, \emph{The rotation graph of binary trees is Hamiltonian}, J. Algorithms {\bf 8} (1987), no.~4, 503--535; MR0920505

\bibitem{MannevillePilaud2016}
T. Manneville and V. Pilaud,
\emph{Graph properties of graph associahedra}, S\'em. Lothar. Combin. 73 ([2014--2016]), Art. B73d, 31 pp.; MR3383157

\bibitem{MerinoMutzeNamrata2023}
A.~I. Merino, T. M\"utze and Namrata, \emph{Kneser graphs are Hamiltonian}, Adv. Math. {\bf 468} (2025), Paper No. 110189, 80 pp.; MR4876460


\bibitem{MutzeNummenpaloWalczak2021}
T. M\"utze, J. Nummenpalo and B. Walczak, \emph{Sparse Kneser graphs are Hamiltonian}, J. Lond. Math. Soc. (2) {\bf 103} (2021), no.~4, 1253--1275; MR4273468


\bibitem{MolnarPohoataZhengZhu2025}
A. Molnar, C. Pohoata, M. Zheng, and D. Zhu,
\emph{A Lov\'asz--Kneser theorem for triangulations},
arXiv:2510.27689.

\bibitem{Postnikov2009}
A. Postnikov, \emph{Permutohedra, associahedra, and beyond}, Int. Math. Res. Not. IMRN {\bf 2009}, no.~6, 1026--1106; MR2487491

\bibitem{Steinhaus1964}
H. Steinhaus, {\it One hundred problems in elementary mathematics}, Basic Books, New York, 1964; MR0157881

\bibitem{Trotter1962}
H. F. Trotter. \emph{Algorithm 115}: Perm. Commun. ACM, 5(8):434–435, 1962

\end{thebibliography}
\end{document}